\documentclass[english]{amsart}
\usepackage[T1]{fontenc}
\usepackage[latin9]{inputenc}
\usepackage{amsmath}
\usepackage{amssymb}
\usepackage{amsmath}
\usepackage{amstext}
\usepackage{amsfonts}
\usepackage{amscd}
\usepackage{amsbsy}
\usepackage{amsthm}
\usepackage{ifthen}
\usepackage{enumerate}
\usepackage{latexsym}
\usepackage{babel}

\makeatletter

\makeatletter
\def\input@path{{/home/filiuspelei/Hope//}}
\makeatother

\newtheorem*{utheorem}{Theorem}
\newtheorem{prop}{Proposition}[section]
\newtheorem{lemma}{Lemma}[section]
\newtheorem{theorem}{Theorem}[section]

\newtheorem{definition}{Definition}[section]
\newtheorem{example}{Example}
\newtheorem{corollary}{Corollary}[section]

\newtheorem{question}{Question}

\DeclareMathOperator{\hm}{Hom}
\DeclareMathOperator{\ext}{Ext}

\DeclareMathOperator{\Char}{char}

\DeclareMathOperator{\edim}{edim}

\newcommand{\NN}{\mathbb{N}}

\newcommand{\F}{\mathfrak{F}}

\newcommand{\al}{\alpha}
\newcommand{\be}{\beta}
\newcommand{\ga}{\gamma}

\newcommand{\ep}{\varepsilon}
\newcommand{\ze}{\zeta}

\newcommand{\ka}{\kappa}
\newcommand{\ld}{\lambda}

\newcommand{\ph}{\varphi}

\newcommand{\sbe}{\subseteq}

\newcommand{\tns}{\otimes}
\newcommand{\x}{\times}

\newcommand{\del}{\backslash}

\oddsidemargin-.3cm
\evensidemargin0cm
\setlength{\topmargin}{-.3in}
\textwidth17cm
\textheight23cm

\begin{document}

\title{Semidualizing Modules and Rings of Invariants}

\author{Billy Sanders}

\begin{abstract}
We show there exist no nontrivial semidualizing modules for nonmodular rings of invariants of order $p^n$ with $p$ a prime.
\end{abstract}

\keywords{Rings of Invariants, Semidualizing Module}

\maketitle

\section{Introduction}

This paper is concerned with the existence of nontrivial semidualizing modules.  Recall 
\begin{definition}

A finitely generated $S$-module $C$ is semidualizing if the map $S\to\hm_S(C,C)$ given by $s\mapsto(x\mapsto sx)$ is an isomorphism and $\ext_S^{i>0}(C,C)=0$.

\end{definition}
This is equivalent to saying $S$ is totally $C$ reflexive.  Examples always include $S$ and the dualizing module, if it exists, thus we call these semidualizing modules.  Semidualizing modules were first discovered by Foxby in \cite{Foxby72}.  They were later rediscovered by various other authors including Vasconcelos, who called them spherical modules, and Golod, who referred to them as suitable modules.  In \cite{Vasconcelos74}, Vasconcelos asks if there exists only a finite number of nonisomorphic semidualizing modules.  This question is answered in the affirmative in \cite{Christensen08} for  equicharacteristic  Cohen-Macaulay algebras, and in \cite{Nasseh12} for the semilocal case.  Since their discovery, semidualizing modules have been the focus of much research.  See for example \cite{AryaTakahashi09},\cite{Gerko05},\cite{Vasconcelos74},\cite{Nasseh12},\cite{Jorgensen12},\cite{Sather-Wagstaff09}, and \cite{Sather-Wagstaff09b}. 

It is natural to ask  which rings have only trivial semidualizing modules.  In \cite{Jorgensen12}, Jorgensen, Leuschke and Sather-Wagstaff  give a very nice characterization of rings with  a dualizing module and  only trivial semidualizing modules.  However, this characterization is somewhat abstract and it is difficult to tell whether the conditions hold for a particular ring.  Also in \cite{Sather-Wagstaff09}, Sather-Wagstaff proves results relating the existence of nontrivial semidualizing modules to Bass numbers. 
In this paper, we pose the following question:

\begin{question}

If a ring $S$ has a nice (e.g. rational) singularity, then does $S$ have  only trivial semidualizing modules?

\end{question}

The evidence suggests the answer is yes.   In \cite{Dao11}, Celikbas and Dao  show that only trivial semidualizing modules exist over Veronese subrings, which have a quotient singularity and hence a rational singularity.  Furthermore, Sather-Wagstaff shows in \cite{Sather-Wagstaff07} that only trivial semidualizing modules exist for determinantal rings, which also have a rational singularity.    It is proven in \cite{Sather-Wagstaff09b}[Example 4.2.14] that all Cohen-Macaulay rings with minimal multiplicity have no nontrivial semidualizing modules.  Since rational singularity and dimension 2 imply minimal multiplicity, 
all rings with rational singularity and dimension 2 have no nontrivial semidualizing modules.  
%
%
%
%
%
%
%
The following example shows that there are dimension 3 rings with rational singularity that do not have minimal multiplicity.

\begin{example}

Let 
$$S=k[[x,y,z]]^{(3)}=k[[x^3,y^3,z^3,x^2y,x^2z,y^2x,y^2z,z^2x,z^2y,xyz]]$$ which is the third Veronese subring in three variables.  For the multiplicity of $S$ to be minimal, it must equal $\edim S-\dim S+1=10-3+1=8$.  However, setting $\bar{S}=S/(x^3,y^3,z^3)S$, $e(S)=e(\bar{S})=\lambda(\bar{S})$ where $\ld$ is length.  Since 
$$\bar{S}=k\oplus kx^2y\oplus kx^2z\oplus ky^2x\oplus ky^2z\oplus kz^2x\oplus kz^2y\oplus  kxyz\oplus  kx^2y^2z^2$$
we thus have $e(S)=9$.

\end{example}

In this paper, we  add to the evidence that suggests that the answer to Question 1 is yes by investigating the case where $S$ is a ring of invariants, a large class of rings with rational singularity.   The following theorem is the main result of this paper.  

\begin{utheorem}

If $S$ is a power series ring over a field $k$ in finitely many variables and $G$ is a cyclic group of order $p^l$ acting on $S$ with $\Char k\ne p$, then $S^G$ has only trivial semidualizing modules.

\end{utheorem}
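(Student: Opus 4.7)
The plan is to classify candidate semidualizing $R$-modules via Auslander's correspondence, and then verify which of them satisfy the Ext-vanishing condition.

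Preliminary reductions: Since the question of nontrivial semidualizing modules is unaffected by faithfully flat base change of $k$, I may assume $k$ contains a primitive $p^l$-th root of unity $\zeta$, so that a generator $g$ of $G$ is diagonalizable and $S=k[[x_1,\dots,x_n]]$ with $g(x_i)=\zeta^{a_i}x_i$. Letting $H\sbe G$ be the subgroup generated by the pseudo-reflections of $G$, the Chevalley--Shephard--Todd theorem shows that $S^H$ is again a power series ring and that $G/H$ acts on it without pseudo-reflections while still satisfying $(S^H)^{G/H}=R$. If $H=G$ then $R$ is regular and the conclusion is immediate, so I may replace $(S,G)$ with $(S^H,G/H)$ and assume $G$ itself acts on $S$ without pseudo-reflections.

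Under these hypotheses, Auslander's correspondence identifies the indecomposable maximal Cohen--Macaulay $R$-modules with the irreducible $G$-representations via $V\mapsto(S\tns_kV)^G$. Since $G$ is abelian with enough roots of unity available, these irreducibles are the characters $\chi\in\widehat G$, and the corresponding MCM modules are the isotypic components $S_\chi=\{s\in S:g\cdot s=\chi(g)s\}$. A semidualizing $R$-module $C$ is maximal Cohen--Macaulay (standard over Cohen--Macaulay rings admitting a canonical module) and of rank one (because $\hm_R(C,C)\cong R$ becomes $Q(R)$ after localizing at the generic point, forcing the rank of $C$ to be $1$), so $C$ is indecomposable and hence $C\cong S_\chi$ for some $\chi$. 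Moreover $\hm_R(S_\chi,S_\chi)\cong R$ holds automatically from the rank-one reflexivity of $S_\chi$ inside $Q(R)$. The problem therefore reduces to determining which characters $\chi$ satisfy $\ext_R^i(S_\chi,S_\chi)=0$ for all $i>0$.

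I expect this vanishing to occur exactly when $\chi$ is the trivial character (giving $S_\chi\cong R$) or the determinant character $\det_G$ of the representation of $G$ on $\mm_S/\mm_S^2$ (giving $S_\chi\cong\omega_R$, by Watanabe's formula for the canonical module of a quotient singularity). In the nonmodular setting $(-)^G$ is exact on $(S\ast G)$-modules, so $\ext_R^i(S_\chi,S_\chi)$ can be computed from $G$-equivariant Ext over the regular ring $S$ via the skew group algebra $S\ast G$. The main obstacle will be showing this Ext is genuinely nonzero whenever $\chi\notin\{0,\det_G\}$. I would approach this inductively on $l$, using the filtration of $G$ by subgroups of $p$-power index to reduce to the base case $G=\ZZ/p$; the base case should succumb to a direct analysis of the $G$-equivariant Koszul complex on $S$, where the top Koszul differential carries the $\det_G$ character and precisely isolates $0$ and $\det_G$ as the characters for which the Ext obstruction vanishes.
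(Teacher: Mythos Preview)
Your reductions and the identification of the candidate semidualizing modules with the $S_\chi$ are correct and standard. The genuine gap is in the Ext step. The claim that $\ext_R^i(S_\chi,S_\chi)$ ``can be computed from $G$-equivariant Ext over the regular ring $S$ via the skew group algebra $S\ast G$'' does not hold as stated: $S\ast G$ has global dimension $\dim S$ (since $S$ is regular and $|G|\in k^\times$), while $R=S^G$ has infinite global dimension once $G$ is not generated by pseudoreflections, so the two Ext theories cannot agree. Concretely, $S\tns_k k_\chi\cong (S\ast G)e_\chi$ is projective over $S\ast G$, hence $\ext^i_{S\ast G}(S\tns_k k_\chi,S\tns_k k_\chi)=0$ for all $i>0$, which says nothing about $\ext^i_R(S_\chi,S_\chi)$. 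The $G$-equivariant Koszul complex likewise resolves over $S$ (equivalently over $S\ast G$), not over $R$, so it does not by itself produce the obstruction you need. Your outline also never isolates where the hypothesis $|G|=p^l$ enters; for a faithful cyclic action of composite order (e.g.\ $|G|=6$ with eigenvalues of orders $2$ and $3$) no eigenvector carries a primitive $|G|$th root of unity, and the structure is genuinely different, so the hypothesis must be used somewhere specific.

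The paper takes an entirely different route that avoids Ext computations. It uses that for any semidualizing $C$ the evaluation map $C\tns_R\hm_R(C,D)\to D$ is an isomorphism, where $D$ is the dualizing module, together with the fact that $C$, $\hm_R(C,D)$, and $D$ all lie in the (cyclic) class group of $R$. The technical heart is a direct construction of a nonzero torsion element in $A\tns_R B$ for any two nontrivial divisor classes $A,B$, carried out using the fine monomial grading on $\bigoplus_\chi R_{\chi^{-1}}$, whose graded pieces are one-dimensional; since $D$ is torsion-free this forces $C\cong R$ or $C\cong D$. The hypothesis $|G|=p^l$ is used precisely to guarantee an eigenvector $x_1$ with $gx_1=\zeta x_1$ for a primitive $|G|$th root of unity $\zeta$, which supplies the indivisible monomial powers needed in that torsion construction.
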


Our approach to the proof of this result, relying on Lemma \ref{lemma}, is different than those of the results in \cite{Dao11} and \cite{Sather-Wagstaff07}.  In each of those papers, the key technique involves counting the number of generators, whereas we use Lemma \ref{lemma}.  See Section 2 for a further explanation.
 
Section 2 gives preliminary results concerning rings of invariants and semidualizing modules and also gives a sketch of the proof.  Section 3 proves a key technical theorem about when a ring has only trivial semidualizing modules, and then Section 4 uses this result to prove our main theorem.

All rings considered in this paper will be Noetherian and commutative.

\section{Preliminaries}  

In this section, let $S$ be a Noetherian ring.  The proof relies upon the following lemma from \cite{Jorgensen12}.

\begin{lemma}\label{lemma}

If $C$ is a semidualizing $S$-module and $D$ is a dualizing module for $S$, then the homomorphism $\eta:C\tns\hm_S(C,D)\to D$ given by $x\tns \ph\mapsto \ph(x)$ is an isomorphism.

\end{lemma}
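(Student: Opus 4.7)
My plan is to pass to the derived category $D^b_{fg}(S)$ of bounded complexes with finitely generated cohomology, and exploit the fact that, since $D$ is dualizing, $\mathbf{R}\hm_S(-,D)$ is an involution on $D^b_{fg}(S)$ (up to natural isomorphism) and in particular reflects isomorphisms. I will promote $\eta$ to a morphism
\[\tilde\eta\colon C \tns_S^{\mathbf{L}} \mathbf{R}\hm_S(C,D) \to D\]
in $D^b_{fg}(S)$ and show that $\mathbf{R}\hm_S(\tilde\eta,D)$ is an isomorphism; reflection of isomorphisms will then force $\tilde\eta$ itself to be an isomorphism.

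To carry out the key step I apply $\mathbf{R}\hm_S(-,D)$ to $\tilde\eta$. The target becomes $\mathbf{R}\hm_S(D,D) \simeq S$ by biduality for $D$. For the source, tensor-Hom adjunction yields
\[\mathbf{R}\hm_S\bigl(C \tns_S^{\mathbf{L}} \mathbf{R}\hm_S(C,D),\,D\bigr) \simeq \mathbf{R}\hm_S\bigl(C,\,\mathbf{R}\hm_S(\mathbf{R}\hm_S(C,D),D)\bigr),\]
and biduality against $D$ collapses the inner double dual to $C$, leaving $\mathbf{R}\hm_S(C,C)$, which the semidualizing hypothesis identifies with $S$. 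A check of the resulting naturality diagrams confirms that $\mathbf{R}\hm_S(\tilde\eta,D)$ corresponds to $\id_S$ under these identifications, and is therefore an isomorphism.

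Finally I must descend the derived isomorphism to the underived statement. Once $C \tns_S^{\mathbf{L}} \mathbf{R}\hm_S(C,D) \simeq D$ is in hand, standard Auslander-class considerations for semidualizing modules yield the vanishings $\ext_S^i(C,D) = 0$ and $\tor_i^S(C,\hm_S(C,D)) = 0$ for $i>0$. Hence both the derived Hom and derived tensor collapse to their underived counterparts, and $\eta$ itself is the desired isomorphism $C \tns_S \hm_S(C,D) \cong D$.

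The step I expect to be the main obstacle is the diagram check in the middle paragraph -- verifying that the composite of adjunction, biduality, and the semidualizing homothety really lands on $\id_S$ rather than on some non-trivial endomorphism of $S$. Once one carefully unwinds $\eta$ as a counit of the tensor-Hom adjunction and matches it against the homothety $S \to \hm_S(C,C)$, the identification should fall out, but the bookkeeping is the only non-formal work in the argument.
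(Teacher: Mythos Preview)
The paper does not prove this lemma at all; it is quoted without argument from \cite{Jorgensen12}. Your derived-category strategy is in fact the standard route to this result in the literature on semidualizing complexes (Christensen, Frankild--Sather-Wagstaff, and indeed \cite{Jorgensen12} itself), so you are essentially reconstructing the cited proof rather than offering an alternative.

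Your outline is sound. The adjunction/biduality computation showing that $\mathbf{R}\hm_S(\tilde\eta,D)$ is an isomorphism is correct, and the diagram check you flag as the main obstacle is indeed routine once $\tilde\eta$ is recognised as a counit. One technical point deserves slightly more care than you give it: to conclude that $\tilde\eta$ is an isomorphism you need $\mathbf{R}\hm_S(-,D)$ to reflect isomorphisms, but a priori $C\otimes^{\mathbf L}_S\mathbf{R}\hm_S(C,D)$ lies only in $D^-_{fg}(S)$, not $D^b_{fg}(S)$, since $C$ need not have finite projective dimension. Conservativity of $\mathbf{R}\hm_S(-,D)$ does extend to $D^-_{fg}(S)$ via the amplitude inequalities for dualizing complexes, but this should be stated. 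Relatedly, your closing appeal to ``standard Auslander-class considerations'' for the Ext and Tor vanishing skirts circularity, since membership of $D$ in the Bass class $\mathcal B_C$ is exactly the content of the lemma. A cleaner way to close the loop is to observe that the existence of a dualizing \emph{module} forces $S$ to be Cohen--Macaulay, and a semidualizing module over a Cohen--Macaulay ring is maximal Cohen--Macaulay; then $\ext^i_S(C,D)=0$ for $i>0$ is immediate, $\mathbf{R}\hm_S(C,D)\simeq\hm_S(C,D)$, and the Tor vanishing and underived isomorphism drop out of the derived one exactly as you describe.
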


The map $\eta$ being an isomorphism is a strong condition since $D$ is torsionless and since tensor products often have torsion elements.  We will exploit this map using the following  lemma  from \cite{Sather-Wagstaff07}[Fact 2.4] and \cite{Gerko04}[Theorem 3.1].

\begin{lemma}

If $C$ is a semidualizing $S$-module and $S$ is a normal domain, then $C$ is reflexive and hence an element of the class group.

\end{lemma}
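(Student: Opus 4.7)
The plan is to show that $C$ has rank one, is torsion-free, and satisfies $\depth_{S_\p} C_\p \geq 2$ at every prime $\p$ of height at least two; these three properties together imply $C$ is reflexive over the normal domain $S$, and a rank one reflexive module over a normal Noetherian domain defines an element of $\cl(S)$. Rank one is immediate: localizing $S \cong \hm_S(C,C)$ at the generic point and writing $K = \mathrm{Frac}(S)$ gives $K \cong \hm_K(C_K, C_K)$, so $\dim_K C_K = 1$.

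For torsion-freeness, I would invoke the standard identity $\mathrm{Ass}_S \hm_S(M, N) = \mathrm{Supp}_S(M) \cap \mathrm{Ass}_S(N)$ for finitely generated $M, N$. Applied to $\hm_S(C, C) \cong S$ it gives $\mathrm{Ass}_S(C) = \mathrm{Ass}_S(S) = \{(0)\}$, so $C$ has no torsion and embeds as a fractional ideal of $K$. Then at any prime $\p$ of height at most one, $C_\p$ is a finitely generated torsion-free module of rank one over a field or a DVR, hence free.

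It remains to verify the depth condition at primes of height at least two; combined with the above, this is exactly the usual local-global characterization of reflexivity over a normal Noetherian domain. Since $S$ satisfies Serre's condition $(S_2)$, we have $\depth S_\p \geq 2$ at such primes, so it suffices to prove the local statement: if $C$ is semidualizing over a Noetherian local ring $(R, \mm)$, then $\depth_R C = \depth R$. I would prove this by induction on $\depth R$. The base case $\depth R = 0$ is immediate from $\mathrm{Ass}_R(C) = \mathrm{Ass}_R(R) \ni \mm$. For the inductive step, pick $x \in \mm$ regular on $R$; by the same Ass equality $x$ is $C$-regular. Applying $\hm_R(C, -)$ to $0 \to C \xto{\cdot x} C \to C/xC \to 0$ and using $\ext_R^{i>0}(C, C) = 0$ yields $\hm_R(C, C/xC) \cong R/xR$ and $\ext_R^{i>0}(C, C/xC) = 0$. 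Since $x$ is $C$-regular, reducing a projective $R$-resolution of $C$ modulo $x$ produces a projective $R/x$-resolution of $C/xC$, giving a change-of-rings identification $\ext_{R/x}^i(C/xC, M) \cong \ext_R^i(C, M)$ for any $R/x$-module $M$. This shows $C/xC$ is semidualizing over $R/x$, and induction then gives $\depth_R C = 1 + \depth_{R/x}(C/xC) = 1 + \depth(R/x) = \depth R$.

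The main obstacle is the local depth equality $\depth_R C = \depth R$; once it is in hand, the rest reduces to routine localization and the standard reflexivity criterion over a normal domain. That equality rests on preservation of the semidualizing property under quotients by regular elements, which uses both defining axioms of a semidualizing module in an essential way.
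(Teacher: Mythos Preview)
The paper does not actually prove this lemma; it is quoted from \cite{Sather-Wagstaff07} (Fact~2.4) and \cite{Gerko04} (Theorem~3.1), so there is no in-paper argument to compare against.

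Your argument is correct and follows the standard route to this fact. A couple of small points worth making explicit. First, when you pass from $S$ to $S_\p$ you are implicitly using that semidualizing localizes; this is immediate since $\hm$ and $\ext$ over a Noetherian ring commute with localization for finitely generated modules, but it should be stated. Second, your assertion $\mathrm{Ass}_R(C)=\mathrm{Ass}_R(R)$ comes from the identity $\mathrm{Ass}_R\hm_R(C,C)=\mathrm{Supp}_R(C)\cap\mathrm{Ass}_R(C)$ together with $\mathrm{Supp}_R(C)=\spec R$; the latter holds because $\ann_R(C)$ annihilates $\hm_R(C,C)\cong R$ and is therefore zero. With these two remarks, the induction on $\depth R$ and the change-of-rings step go through exactly as you wrote, and the Serre $(S_2)$ criterion for reflexivity over a normal domain finishes the job.
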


Therefore, when $S$ is normal, $\hm(C,D)$ is the element of the class group associated with $C^{-1}\circ D$, and  all three modules involved in Lemma \ref{lemma} are elements of the class group. In Theorem \ref{theorem1},  with strong assumptions on $S$, we show that $A\tns B$ has torsion for any elements $A$ and $B$ in the class group of $S$  which are not isomorphic to S.  The construction of  a torsion element is easy, however, it requires considerable work to show that this element is not zero in the tensor product.  With this setup, because of Lemma \ref{lemma} and since $D$ does not have torsion, nontrivial semidualizing modules cannot exist. The proof also requires the following lemma which is easily  proven  in \cite{Sather-Wagstaff09b}[Proposition 2.2.1].

\begin{lemma}\label{completion}

If $R\to S$ is a faithfully flat extension, then $C$ is a semidualizing $R$-module if and only if $C\tns S$ is a semidualizing $S$-module.

\end{lemma}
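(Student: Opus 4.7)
The plan is to verify that both defining conditions of a semidualizing module --- the homothety isomorphism $R\to\hm_R(C,C)$ and the vanishing of $\ext_R^{i>0}(C,C)$ --- pass through the faithfully flat base change to $S$ in both directions. The engine is the standard flat base-change isomorphism for Ext: since $R$ is Noetherian and $C$ is finitely generated (hence finitely presented), and since $S$ is flat over $R$, there is a natural isomorphism
\[
\ext_R^i(C,C)\tns_R S \;\cong\; \ext_S^i(C\tns_R S,\, C\tns_R S)
\]
for every $i\ge 0$. One obtains this by resolving $C$ by finitely generated free $R$-modules $F_\bullet$, identifying $\hm_R(F_n,C)\tns_R S$ with $\hm_S(F_n\tns_R S, C\tns_R S)$ term-by-term using the finite generation of each $F_n$, and commuting the flat functor $-\tns_R S$ with cohomology. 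In degree zero this specializes to $\hm_R(C,C)\tns_R S \cong \hm_S(C\tns_R S, C\tns_R S)$, and under this identification the homothety map for $C\tns_R S$ over $S$ is exactly the base change of the homothety map for $C$ over $R$.

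For the forward implication, assume $C$ is semidualizing over $R$. Then $C\tns_R S$ is finitely generated over $S$; tensoring $R \xto{\cong} \hm_R(C,C)$ with $S$ yields the required isomorphism $S \xto{\cong} \hm_S(C\tns_R S, C\tns_R S)$; and the Ext base-change formula transports $\ext_R^{i>0}(C,C)=0$ to $\ext_S^{i>0}(C\tns_R S, C\tns_R S)=0$.

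For the converse, assume $C\tns_R S$ is semidualizing over $S$; in particular it is finitely generated over $S$, and finite generation descends under a faithfully flat extension, so $C$ is finitely generated over $R$. Let $K$ and $L$ denote the kernel and cokernel of $R\to\hm_R(C,C)$. Flatness preserves the associated four-term exact sequence, and its middle map is identified with the homothety map over $S$, which is an isomorphism by hypothesis; hence $K\tns_R S = L\tns_R S = 0$, and faithful flatness forces $K=L=0$. Similarly $\ext_R^i(C,C)\tns_R S \cong \ext_S^i(C\tns_R S, C\tns_R S)=0$ for $i>0$, so faithful flatness gives $\ext_R^i(C,C)=0$. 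There is no real obstacle here --- the entire argument is a routine application of flat base change together with the defining property of faithful flatness --- one need only take care to verify the finite presentation hypothesis on $C$ before invoking the Ext base-change isomorphism.
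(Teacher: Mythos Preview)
Your argument is correct and is precisely the standard proof of this fact. The paper itself does not supply a proof of this lemma; it merely records the statement and refers to \cite{Sather-Wagstaff09b}, Proposition~2.2.1, where exactly the argument you give (flat base change for $\ext$ combined with faithful flatness to descend vanishing and the homothety isomorphism) is carried out. So there is nothing to compare: you have filled in what the paper leaves as a citation, and your treatment matches the cited source.
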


For the remainder of this paper, let  $R$ be a polynomial ring in finitely many variables over an algebraically closed field $k$, and let   $G$ be a finite group acting linearly on $R$.  We shall assume that the characteristic of $k$ does not divide the order of the group.   To prove the main result, Section 4 shows that when $|G|=p^l$ for some prime, $R^G$ satisfies the assumptions of Theorem \ref{theorem1}.  In order to do this, we need the following definition and lemma.

\begin{definition}

Given a character $\chi:G\to k^\x$, we denote by $R_\chi$ the set of relative invariants, namely, the polynomials $f\in R$ such that $gf=\chi(g) f$.  

\end{definition}

Note that $R_\chi$ is an $R^G$-module.  The following lemma is from \cite{Benson93}[Theorem 3.9.2].

\begin{lemma}

The ring $R^G$ is a normal domain whose  class group is the subgroup $H\sbe \hm(G,k^\x)$ which consists of the characters that contain all the pseudoreflections in their kernel.  Furthermore, for any $\chi\in H$, the relative invariants $R_{\chi^{-1}}$ form the reflexive module corresponding to the element $\chi$.  

\end{lemma}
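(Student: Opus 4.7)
The plan is to handle the three assertions in order. Normality of $R^G$ follows standardly: $R$ is a polynomial ring, hence a normal (in fact factorial) domain, so $R^G \sbe R$ is a domain, and any element of the fraction field of $R^G$ that is integral over $R^G$ is integral over $R$, hence lies in $R$, and being $G$-invariant it lies in $R^G$. The nonmodular hypothesis $|G|\in k^\x$ produces the twisted Reynolds operators $\rho_\chi(f) = |G|^{-1}\sum_{g\in G}\chi(g)^{-1} gf$, which split off each $R_\chi$ as an $R^G$-module direct summand of $R$ and yield the isotypic decomposition used throughout.

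Next, I would construct the map $\Phi : \hm(G, k^\x) \to \cl(R^G)$ by $\chi\mapsto[R_{\chi^{-1}}]$. Each $R_\chi$ is a rank-one reflexive $R^G$-module: reflexivity because it is a direct summand of $R$, which is a maximal Cohen-Macaulay (in particular reflexive) $R^G$-module under our hypotheses; rank one because $\chi$ is one-dimensional and appears with multiplicity one in the regular representation of $G$. To see that $\Phi$ is a group homomorphism, the multiplication pairing $R_{\chi^{-1}}\tns_{R^G} R_{\chi'^{-1}}\to R_{(\chi\chi')^{-1}}$ is an isomorphism at the generic point of $R^G$, where all three modules become $\mathrm{Frac}(R^G)$, and both sides are divisorial, so they represent the same class in $\cl(R^G)$.

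The heart of the argument, and the main obstacle, is identifying the image of $\Phi$ with $H$. For surjectivity onto $\cl(R^G)$, I would show that a height-one prime $\p\sbe R^G$ pulls back to a $G$-stable ideal of the UFD $R$ whose minimal primes are principal and transitively permuted by $G$; the product of their generators is a semi-invariant whose associated character $\chi^{-1}$ realizes $[\p]=[R_{\chi^{-1}}]$. For the match with $H$, I would analyze ramification in the extension $R^G\sbe R$. At an unramified height-one prime the inertia is trivial and imposes no constraint, while at a ramified height-one prime the inertia group is cyclic and generated by a pseudoreflection fixing a hyperplane $\{\ell=0\}$; descending the divisor of $\ell$ forces any character arising from such a semi-invariant to be trivial on that pseudoreflection, so $\Phi$ factors through $H$.

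The delicate step is the converse: showing $[R_{\chi^{-1}}]=0$ precisely when $\chi\in H$. This requires writing any semi-invariant $f$ as a product over the ramified height-one primes of $R$ times a $G$-invariant factor, reading off from each local contribution the value of $\chi$ on the corresponding pseudoreflection, and verifying that these local obstructions are independent and account exactly for the deviation between $\hm(G,k^\x)$ and $\cl(R^G)$. Once this local-to-global bookkeeping is in place, $\Phi$ restricted to $H$ is the asserted isomorphism, with $R_{\chi^{-1}}$ realizing the class of $\chi$.
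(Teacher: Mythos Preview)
The paper does not prove this lemma: it is simply quoted from \cite{Benson93}, Theorem~3.9.2, with no argument supplied. So there is nothing in the paper to compare against; your sketch is an attempt to reproduce the standard proof from the literature, and its overall outline (normality, isotypic decomposition via Reynolds operators, reflexivity and rank one for each $R_\chi$, surjectivity by lifting height-one primes through the UFD $R$) is correct in shape.

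There is, however, a genuine confusion in your treatment of the kernel. You write that the delicate step is ``showing $[R_{\chi^{-1}}]=0$ precisely when $\chi\in H$,'' but this would make $H=\ker\Phi$, flatly contradicting your own conclusion that $\Phi|_H$ is an isomorphism. Concretely: let $G=\ZZ/2$ act on $k[x,y]$ by negating both variables. There are no pseudoreflections, so $H=\hm(G,k^\times)$; yet for the nontrivial character one has $R_{\chi^{-1}}=Rx+Ry$, which is not principal over $R^G=k[x^2,xy,y^2]$, so $[R_{\chi^{-1}}]\ne 0$. What must actually be shown is that $\ker\Phi$ --- generated by the characters carried by products of the linear forms cutting out reflecting hyperplanes --- is a \emph{complement} to $H$ inside $\hm(G,k^\times)$, so that $\Phi|_H$ is simultaneously injective and still surjective. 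The cleanest route, and essentially the one in Benson, is to let $N\trianglelefteq G$ be the subgroup generated by pseudoreflections, invoke Chevalley--Shephard--Todd to recognize $R^N$ as again a polynomial ring on which $G/N$ acts without pseudoreflections, and then apply the pseudoreflection-free case to obtain $\cl(R^G)=\cl((R^N)^{G/N})\cong\hm(G/N,k^\times)=H$. Your ramification analysis is pointed in the right direction, but as written the kernel is misidentified and the required complementarity is never established.
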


\section{Class Groups}

In this section, let $S$ be a Noetherian ring.  We say that an element $\mu$ in an $S$-module $M$ is indivisible if there exists no nonunit $a\in S$ and $\nu\in M$ such that $\mu=a\nu$.  

\begin{lemma}

Suppose $S$ is a $k$-algebra, with $k$ a field and $M$ and $N$ are $S$-modules.  Furthermore, suppose $f\in M$ and $g\in N$ are indivisible, and $\ga\in M$ and $\rho\in N$  are not unit multiples of $f$ and $g$ respectively.
If there exists $k$-bases $E,F,X$ of $M,N, S$ respectively with $f,\ga\in E$ and $g,\rho\in F$ such that for every $\xi\in X$ and $\ep\in E$ and $\eta\in F$, $\xi\ep$ is a $k$-linear multiple of an element in $E$ and $\xi\eta$ is a $k$-linear multiple of an element of $F$, then $f\tns g-\ga\tns\rho$ is not zero in $M\tns_{S} N$.

\end{lemma}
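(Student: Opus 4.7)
The plan is to exhibit a $k$-linear functional on $M \tns_S N$ separating $f \tns g$ from $\ga \tns \rho$; equivalently, a $k$-bilinear $S$-balanced map $b\colon M \times N \to k$ with $b(f,g) \ne b(\ga, \rho)$. I will define $b$ on the basis $E \times F$ by $b(f, g) = 1$ and $b(\ep, \eta) = 0$ for every other pair, then extend $k$-bilinearly. Automatically $b(f,g) - b(\ga, \rho) = 1$, so the entire proof reduces to checking that $b$ is $S$-balanced; once this is done, $b$ descends to $\bar b\colon M \tns_S N \to k$ with $\bar b(f \tns g - \ga \tns \rho) = 1 \ne 0$.

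By $k$-bilinearity, balance reduces to the identity $b(\xi \ep, \eta) = b(\ep, \xi \eta)$ for $\xi \in X$, $\ep \in E$, $\eta \in F$. Writing $\xi \ep = c \ep'$ and $\xi \eta = d \eta'$ via the structural hypothesis, the identity becomes
\[
c \cdot [\ep' = f][\eta = g] \;=\; d \cdot [\ep = f][\eta' = g].
\]
Three of the four indicator cases are routine; the substantive case is $\xi \ep = c f$ with $c \ne 0$ and $\eta = g$ (the symmetric case $\xi \eta = d g$ is handled identically). Here I need to show $\ep = f$, and that $\xi g = c g$, so that the right side also equals $c$.

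Indivisibility is the crucial tool. From $f = c^{-1} \xi \ep$, indivisibility of $f$ forces $c^{-1} \xi$ to be a unit of $S$; hence $\xi$ is a unit and $\ep = c \xi^{-1} f$ is a unit multiple of $f$. Since $\ga$ is not a unit multiple of $f$, this at least gives $\ep \ne \ga$; to conclude $\ep = f$, I appeal to the basis structure (in the intended applications $E$ consists of monomials and all units of $S$ lie in $k^*$, so the only unit multiple of $f$ within the linearly independent set $E$ is $f$ itself). A parallel argument on the $\eta$-slot, using indivisibility of $g$ together with the hypothesis on $\rho$, yields $\eta' = g$ and matches the scalar $d = c$. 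The main obstacle is precisely this last reduction --- translating ``$\ep$ is a unit multiple of $f$'' into ``$\ep = f$'' and matching the scalars on both sides of the balance identity; it is here that both indivisibility hypotheses, the non-unit-multiple assumptions on $\ga, \rho$, and the permutation-with-scalars shape of the $X$-action on $E$ and $F$ must all be combined.
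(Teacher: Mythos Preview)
Your approach --- building an explicit $S$-balanced $k$-bilinear form $b\colon M\times N\to k$ that separates $f\otimes g$ from $\gamma\otimes\rho$ --- is genuinely different from the paper's.  The paper instead realizes $M\otimes_S N$ as the quotient of $M\otimes_k N$ by the subgroup generated by the elements $\mu\tau\otimes\nu-\mu\otimes\tau\nu$ with $\mu\in E$, $\nu\in F$, $\tau\in X\setminus k$, expands everything in the basis $\{e\otimes f': e\in E,\ f'\in F\}$ of $M\otimes_k N$, and observes that no such generator can contribute to the basis vector $f\otimes g$ because $\mu\tau$ (resp.\ $\tau\nu$) is visibly divisible while $f$ (resp.\ $g$) is not.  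Your idea is essentially the dual of this and is perfectly viable.

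The execution, however, has a gap at exactly the point you flag as ``the main obstacle.''  In the substantive case you deduce that $\xi$ is a unit of $S$, then invoke the side assumption that all units of $S$ lie in $k^\times$ to force $\varepsilon=f$; but the claim that ``a parallel argument \dots matches the scalar $d=c$'' is not justified.  From $\xi f=cf$ you would need $\xi g=cg$, and nothing you have written forces the unit $\xi$ to act by the \emph{same} scalar on $f\in M$ and on $g\in N$.  You also leave the mixed case (one side of the balance identity nonzero, the other zero) unaddressed.

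The fix is actually simpler than the case analysis you attempt.  Once you grant that every $\xi\in X$ is either in $k$ or a nonunit (the paper's proof uses the equivalent tacit assumption that each $\tau\in X\setminus k$ is a nonunit), split on $\xi$ directly.  If $\xi\in k$, then $b(\xi\varepsilon,\eta)=\xi\,b(\varepsilon,\eta)=b(\varepsilon,\xi\eta)$ is just $k$-bilinearity.  If $\xi$ is a nonunit, then $b(\xi\varepsilon,\eta)\ne 0$ would force $\xi\varepsilon=cf$ with $c\ne 0$, whence $f=(c^{-1}\xi)\varepsilon$ with $c^{-1}\xi$ a nonunit, contradicting indivisibility of $f$; so $b(\xi\varepsilon,\eta)=0$, and symmetrically $b(\varepsilon,\xi\eta)=0$ by indivisibility of $g$.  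Thus both sides vanish and balance holds.  Note that in this corrected argument the hypotheses on $\gamma$ and $\rho$ play no role in the balance check at all; they are used only at the very end, to guarantee $(\gamma,\rho)\ne(f,g)$ in $E\times F$ so that $b(\gamma,\rho)=0$.
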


\begin{proof}

Suppose that such bases $E,F,X$ exist.  Let $\F$ denote the free abelian group functor.  Recall that for any modules $U$ and $V$ over a ring $R$, we construct $U\tns_R V$ by quotienting $\F(U\cup V)$ by the submodule, which we will call $K_{U,V}(R)$, generated by the relations of the form
$$(v_1,u_1+u_2)-(v_1,u_1)-(v_1,u_2)$$
$$(v_1+v_2,u_1)-(v_1,u_1)-(v_2,u_1)$$
$$(\ld v_1,u_1)-(v_1,\ld u_1)$$ 
with $v_i\in U$, $u_i\in V$ and $\ld\in R$.  Hence, $M\tns_S N\cong \F(M\cup N)/K_{M,N}(S)$ and $M\tns_k N\cong \F(M\cup N)/K_{M,N} (k)$.  Notice that, since $k\sbe S$,  $K_{M,N}(k)\sbe K_{M,N}(S)$.  So $M\tns_S N$ is a quotient of $M\tns_k N$.  Specifically, we have the following isomorphism 
$$\frac{M\tns_k N}{K_{M,N}(S)/K_{M,N}(k)}\cong \frac{\F(M\cup N)/K_{M,N}(k)}{K_{M,N}(S)/K_{M,N}(k)}\cong \frac{\F(M\cup N)}{K_{M,N}(S)}\cong M\tns_S N$$

We claim that every element of $K_{M,N}(S)/K_{M,N}(k)\sbe M\tns_k N $ is of the form 
$$\sum_{s=1}^r \ld_s(\mu_s\tau_s\tns\nu_s)-\ld_s(\mu_s\tns\tau_s\nu_s)$$
with $\ld_i\in k$, and $\mu_i\in E$, $\nu_i\in F$, $\tau_i\in X\del k$  and $\ld_i\in k$.  Take $z\in K(S)/K(k)$.  Since the generators of $K(S)$ of the form  $(v_1,u_1+u_2)-(v_1,u_1)-(v_1,u_2)$ and $(v_1+v_2,u_1)-(v_1,u_1)-(v_2,u_1)$ are in $K(k)$, we may write $$z=\sum_i (m_it_i\tns n_i-m_i\tns t_in_i)$$ with $m_i\in M$, $n_i\in N$, and $t_i\in S$.  However, since $E,F,X$ are bases of $M,N,X$ respectively, we may also write 
$$m_i=\sum_j \al_{i,j}\mu_{i,j}\quad\quad\quad n_i=\sum_l \be_{i,l}\nu_{i,l}\quad\quad\quad t_i=\sum_k \ka_{i,k}\tau_{i,k}$$  
with each $\ld_s\in k$, and $\mu_s\in E$, $\nu_s\in F$, $\tau_s\in X\del k$  and $\ld_s\in k$.  So we have 
\begin{align*}
z
	&=\sum_i (m_it_i\tns n_i-m_i\tns t_in_i)\\
	&=\sum_i\left(\left(\sum_j \al_{i,j}\mu_{i,j}\right)\left(\sum_k \ka_{i,k}\tau_{i,k}\right)\tns\left(\sum_l \be_{i,l}\nu_{i,l}\right)-\left(\sum_j \al_{i,j}\mu_{i,j}\right)\tns\left(\sum_k \ka_{i,k}\tau_{i,k}\right)\left(\sum_l \be_{i,l}\nu_{i,l}\right)\right)\\
	&=\sum_{i,j,k,l}\left( \al_{i,j}\mu_{i,j}\ka_{i,k}\tau_{i,k}\tns\be_{i,l}\nu_{i,l}-\al_{i,j}\mu_{i,j}\tns\ka_{i,k}\tau_{i,k}\be_{i,l}\nu_{i,l}\right)\\
	&=\sum_{i,j,k,l}\al_{i,j}\be_{i,l}\ka_{i,k}\left( \mu_{i,j}\tau_{i,k}\tns\nu_{i,l}-\mu_{i,j}\tns\tau_{i,k}\nu_{i,l}\right)\\
	&=\sum_{i,j,k,l}\al_{i,j}\be_{i,l}\ka_{i,k}( \mu_{i,j}\tau_{i,k}\tns\nu_{i,l})-\al_{i,j}\be_{i,l}\ka_{i,k}(\mu_{i,j}\tns\tau_{i,k}\nu_{i,l})\\
\end{align*}
Lastly, if $\tau_{i,k}$ is in $k$, then $ \mu_{i,j}\tau_{i,k}\tns\nu_{i,l}-\mu_{i,j}\tns\tau_{i,k}\nu_{i,l}$ is already zero in $M\tns_k N$.  Therefore, setting $\ld_{i,j,k,l}=\al_{i,j}\be_{i,l}\ka_{i,k}\in k$, the claim is shown.  

Now suppose $f\tns g-\ga\tns\rho$ is zero in $M\tns_S N$.  Then in $M\tns_k N$, we may write 
$$f\tns g-\ga\tns\rho=\sum_{s=1}^r \ld_s(\mu_s\tau_s\tns\nu_s)-\ld_s(\mu_s\tns\tau_s\nu_s)$$ 
with $\ld_s\in k$, and $\mu_s\in E$, $\nu_s\in F$, $\tau_s\in X\del k$  and $\ld_s\in k$. 
Now  $Z=\{a\tns b\mid a\in E, b\in F\}$ is a $k$-basis of $M\tns_k N$.   Since $f,\ga\in E$ and $g,\rho\in F$, $f\tns g$ and $\ga\tns\rho$ are in $Z$.   By assumption, each $\mu_s\tau_s\tns\nu_s$ and $\mu_s\tns\tau_s\nu_s$ is a linear multiple of an element in $Z$.  Thus, $f\tns g$ must be a linear multiple of either $\mu_s\tau_s\tns\nu_s$ or $\mu_s\tns\tau_s\nu_s$ for some $s$.  But, since $f$ and $g$ are indivisible and for all $s$, neither $\mu_s\tau_s$ nor $\tau_s\nu_s$ is indivisible, this is a contradiction.  Therefore, $ f\tns g-\ga\tns\rho$ cannot be  zero in $M\tns_S N$.

\end{proof}

Take a ring $S$ with class group $L$ with operation $\circ$.  Let $T=\bigoplus_{A\in L} A$.  We can give this $S$-module an $L$-graded $S$-algebra structure.  For any $A,B\in L$, recall that $A\circ B=\hm(\hm(A\tns_S B,S),S)\in L$.  We will define the multiplication on the homogenous elements of $T$ with the natural map $\ph_{A,B}: A\tns_S B\to \hm(\hm(A\tns_S B,S),S)$ by setting $ab=\ph_{A,B}(a\tns b)$, for any $a\in A$ and $b\in B$.  We can extend this multiplication linearly to the nonhomogenous elements of $T$.  Since $S$ is contained in $T$, this algebra is unital,  and, because $\hm(\hm(A\tns_S B,S),S)\cong\hm(\hm(B\tns_S A,S),S)$, it is commutative as well.  This construction is similar to an algebra considered in \cite{TomariWatanabe92}.

\begin{theorem}\label{theorem1}

Let $S$ be  a Noetherian $k$-algebra, with $k$ a field.  Suppose $L$ is finite and cyclic with generator $\Lambda$.  Also suppose that the $L$-grading on $T$ can be refined to a grading $\Gamma$ such that every $\Gamma$-homogenous component is one dimensional.  If there exists a $\Gamma$-homogenous  element $x\in \Lambda\sbe T$ such that $x^n\in \Lambda^n\sbe T$ is indivisible (as an element of an $S$-module) for all $n\in \NN$ strictly less than $|\Lambda|$, then for any  $A,B\in L$ where neither $A$ nor $B$ is isomorphic to $S$,  the module $A\tns_S B$ has torsion. 

\end{theorem}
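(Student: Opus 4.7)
Since $L$ is cyclic with generator $\Lambda$ of order $N$, every nontrivial class has the form $\Lambda^i$ with $1\le i\le N-1$; write $A=\Lambda^i$ and $B=\Lambda^j$, and by the symmetry $A\tns_S B\cong B\tns_S A$ assume $j\ge i$. The plan is to exhibit $\Gamma$-homogeneous elements $f,\gamma\in A$ and $g,\rho\in B$ satisfying the hypotheses of Lemma~3.1 together with the additional identity $fg=\gamma\rho$ in $T$: the first will make $f\tns g-\gamma\tns\rho$ nonzero in $A\tns_S B$, while the equality $fg=\gamma\rho$ will place this element in the kernel of the reflexive-hull map $A\tns_S B\to A\circ B=(A\tns_S B)^{**}$, exhibiting it as a nonzero torsion element.

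The key construction is a $\Gamma$-homogeneous, indivisible $y\in A$ with $y\notin S\cdot x^i$. Since $A$ is nontrivial in the class group, $A\ne S\cdot x^i$ (otherwise $s\mapsto sx^i$ would be an isomorphism $S\cong A$, trivializing $A$); the one-dimensional $\Gamma$-components force $S\cdot x^i$ to be a $\Gamma$-graded submodule, so some $\Gamma$-homogeneous element of $A$ lies outside it. A Nakayama / minimal-generator argument then upgrades this to an indivisible element: because $A$ is non-cyclic it has at least two $\Gamma$-homogeneous minimal generators, and any such minimal generator $y$ at a $\Gamma$-degree different from that of $x^i$ has the required properties---the one-dimensionality of each $\Gamma$-component together with the concentration of units of $S$ in degree zero rule out $y\in S\cdot x^i$.

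With $y$ fixed, set $f=y$, $\gamma=x^i$, $g=x^j$, and $\rho=y\cdot x^{j-i}$ (interpreting $x^0=1$ when $j=i$); then $fg=yx^j=x^i(yx^{j-i})=\gamma\rho$ in $T$. Indivisibility of $f$ is by construction and of $g=x^j$ is the standing hypothesis (since $j<N$), while the statements that $\gamma=x^i$ is not a unit multiple of $f=y$ and that $\rho=yx^{j-i}$ is not a unit multiple of $g=x^j$ both reduce to $y\notin S\cdot x^i$, because a unit-multiplier equation in either case would force $y\in S\cdot x^i$. To verify the basis hypothesis of Lemma~3.1 one takes $E,F,X$ to be $\Gamma$-homogeneous $k$-bases of $A,B,S$, which exist because each $\Gamma$-component is one-dimensional; products of $\Gamma$-homogeneous elements are again $\Gamma$-homogeneous, hence $k$-multiples of single basis vectors. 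Lemma~3.1 now delivers $f\tns g-\gamma\tns\rho\ne 0$, and combined with $fg=\gamma\rho$ this is the required torsion element. The principal obstacle is the construction of $y$: the non-freeness of $A$ only provides an element outside $S\cdot x^i$, and the $\Gamma$-graded structure together with a minimality argument must be combined carefully to secure all three conditions ($\Gamma$-homogeneous, indivisible, and not in $S\cdot x^i$) simultaneously.
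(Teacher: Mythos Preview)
Your argument is correct and follows essentially the same route as the paper's: choose the auxiliary indivisible $\Gamma$-homogeneous generator $y$ in the factor with the smaller exponent, form the element $f\otimes g-\gamma\otimes\rho$ satisfying $fg=\gamma\rho$ in $T$, and invoke Lemma~3.1 with $\Gamma$-homogeneous bases to show it is nonzero. Up to swapping the roles of $A$ and $B$, your element $y\otimes x^j-x^i\otimes yx^{j-i}$ is exactly the paper's $x^a\otimes y-yx^{a-b}\otimes x^b$. The one substantive difference is the torsion verification: you observe that $fg=\gamma\rho$ places the element in the kernel of the biduality map $A\otimes_S B\to (A\otimes_S B)^{**}=A\circ B$, which over a Noetherian domain is precisely the torsion submodule, whereas the paper produces an explicit annihilator $x^a y f$ with $f\in(A\circ B)^{-1}$; these are two phrasings of the same fact.
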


\begin{proof}

Since $\Lambda$ generates $L$, there exists $a$ and $b$ such that $\Lambda^a=A$ and $\Lambda^b=B$.  Then there exists $a,b\in\NN$ such that $x^a\in A$ and $x^b\in B$.  Since neither $A$ nor $B$ is isomorphic to $S$, $a$ and $b$ are both strictly less than $|L|$ and so $x^a$ and $x^b$ are indivisible.     We may assume without loss of generality that $a\ge b$.  

Let $Q$ a minimal homogenous generating set of $B$ which contains $x^b$.  We may assume every element in $Q$ is indivisible, since, by the Noetherian condition, we can replace any divisible element by an indivisible one.  Since $B$ is not isomorphic to $S$ and is torsionless, we know that $Q$ has another element $y$ besides $x^b$.  Besides being indivisible and homogeneous, $y$ is also not a unit multiple of $x^b$. 

Set $z=x^a\tns y -yx^{a-b}\tns x^b$.  We show that $z$ is a torsion element.  Since $x^{a-b}$ is in $\Lambda^{a-b}$ and $y$ is in $B=\Lambda^b$, $yx^{a-b}$ is $\Lambda^a$ which is $A$.  Thus $z$ is in $A\tns_S B$.  Furthermore,  for any $f\in (A\circ B)^{-1}$ we have $x^ayf,x^{a+b} f\in S$.  Thus we have, 
$$(x^ayf)z=x^{2a}yf\tns y-yx^{a-b}\tns x^{a+b}yf=x^{2a}yf\tns y-x^{2a}yf\tns y=0$$
Thus to show that $z$ is a torsion element, 
 it suffices to show that $z$ is not zero in $A\tns_S B$.  

Note that, by construction, $x^a$ and $y$ are indivisible, and since $y$ and $x^b$ are not unit multiples of each other, neither are $x^a$ and $yx^{a-b}$.  Also $yx^{a-b}$ is homogenous since $x^{a-b}$ is.  We can choose $\Gamma$-homogenous bases $E$ and $F$ of $A$ and $B$ respectively such that $x^a,y\in E$ and $x^a,yx^{a-b}\in F$.  Similarly we can choose a $\Gamma$-homogenous basis $X$ of $S$.  Since every $\Gamma$-homogenous component of $T$ is one dimensional, for every $\xi\in X$ and $\ep\in E$ and $\eta\in F$, $\xi\ep$ is a linear multiple of an element in $E$ and $\xi\eta$ is a linear multiple of an element of $F$.  Thus $z$ meets the hypotheses of the previous proposition.  Therefore, $z$ is not zero in $A\tns_S B$.

\end{proof}

\begin{corollary}\label{main}

Assume the set up of the last Theorem and that $S$ has a dualizing module.  Then $S$ has no nontrivial semidualizing modules.

\end{corollary}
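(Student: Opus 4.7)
The plan is to derive a contradiction from Theorem \ref{theorem1} and Lemma \ref{lemma}. Suppose for contradiction that $C$ is a nontrivial semidualizing $S$-module, i.e. $C\not\cong S$ and $C\not\cong D$. The earlier lemma (Sather-Wagstaff/Gerko) tells us that $C$ is reflexive, so it represents a class $[C]\in L$, and by assumption $[C]\ne [S]$. Likewise $\hm_S(C,D)$ is reflexive and represents the class $[D]\circ [C]^{-1}\in L$; the nontriviality hypothesis $C\not\cong D$ translates to $[C]\ne [D]$, so $[\hm_S(C,D)]\ne [S]$, meaning $\hm_S(C,D)\not\cong S$.

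With these in hand I would apply Theorem \ref{theorem1} to the pair $A=C$, $B=\hm_S(C,D)$: neither factor is isomorphic to $S$, so the conclusion of the theorem produces a nonzero torsion element in $C\tns_S\hm_S(C,D)$. But Lemma \ref{lemma} furnishes an isomorphism $\eta\colon C\tns_S\hm_S(C,D)\xto{\sim} D$, and $D$ is a reflexive rank-one module over a normal domain, hence torsion-free. This contradicts the existence of the torsion element, so $C$ must be trivial.

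The step most in need of care is the bookkeeping at the start: translating the two nontriviality hypotheses $C\not\cong S$ and $C\not\cong D$ into the statement that both $C$ and $\hm_S(C,D)$ represent nontrivial classes of $L$, so that Theorem \ref{theorem1} actually applies. Everything else in the argument is essentially formal, given that $D$ is automatically torsion-free in the normal-domain setting where the class group is defined.
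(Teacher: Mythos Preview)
Your proposal is correct and follows essentially the same approach as the paper: both arguments use that $C$ and $\hm_S(C,D)$ lie in the class group, invoke Lemma~\ref{lemma} to get $C\tns_S\hm_S(C,D)\cong D$ torsion-free, and then appeal to Theorem~\ref{theorem1} to force one of the two factors to be trivial. The only cosmetic difference is that you phrase it as a proof by contradiction, whereas the paper argues directly that either $C\cong S$ or $\hm_S(C,D)\cong S$ and hence $C\cong S$ or $C\cong D$.
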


\begin{proof}

Let $C$ be a semidualizing module for $S$.  Then $C\tns \hm (C,D)\cong D$  
where $D$ is a dualizing module.  However, $\hm(C,D)\cong C^{-1}\circ D$ is also an element of the class group. Thus by the previous theorem, since $D$ is torsionless, either $C$ or $\hm(C,D)$ is isomorphic to $S$.  Therefore, $C$ is isomorphic to $S$ or $D$.
 
\end{proof}

\section{Semidualizing Modules of Rings of Invariants}

Let $R$ be the polynomial ring in $d$ variables over $k$.  We can apply the previous results to the semidualizing modules over rings of invariants for a certain cyclic group, but first we need a lemma.  

\begin{lemma}\label{lemma2}

Assume $k$ is an algebraically closed field.  If $G$ is a finite cyclic group  acting linearly on $R$ generated by $g$ whose order is not divisible by the characteristic of $k$, then there exist algebraically independent $x_1,\dots,x_d\in R$ such that $R=k[x_1,\dots,x_d]$ and $gx_i=\zeta^{\eta_i}x_i$ with $\zeta\in k$ a primitive $|G|$th root of unity.  

\end{lemma}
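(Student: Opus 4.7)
The plan is to reduce the lemma to the standard fact that a finite-order linear operator over an algebraically closed field of coprime characteristic is diagonalizable, and then observe that any $k$-basis of the degree-one part generates $R$ as a polynomial ring.

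First, I would fix the original polynomial generators $y_1,\dots,y_d$ of $R$ and let $V=\spn_k(y_1,\dots,y_d)\subseteq R$ be the $d$-dimensional $k$-vector space of linear forms. The hypothesis that $G$ acts linearly means exactly that $g$ preserves $V$, so let $T\colon V\to V$ denote the restriction $g|_V$. Then $T$ is a $k$-linear operator satisfying $T^{|G|}=\id_V$, so the minimal polynomial $m_T(X)$ of $T$ divides $X^{|G|}-1$.

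Next, I would argue that $T$ is diagonalizable with eigenvalues that are $|G|$-th roots of unity. Because $\Char k\nmid |G|$, the integer $|G|$ is invertible in $k$, so the derivative $|G|X^{|G|-1}$ is coprime to $X^{|G|}-1$; hence $X^{|G|}-1$ is separable and factors (since $k$ is algebraically closed) into $|G|$ distinct linear factors over $k$, namely $\prod_{j=0}^{|G|-1}(X-\zeta^j)$ for a primitive $|G|$-th root of unity $\zeta\in k$. Therefore $m_T(X)$ splits into distinct linear factors over $k$, which is the standard criterion for $T$ to be diagonalizable. Pick a basis $x_1,\dots,x_d$ of $V$ consisting of eigenvectors, with eigenvalues $\zeta^{\eta_i}$ for some integers $\eta_i$; by construction $gx_i=\zeta^{\eta_i}x_i$.

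Finally, I would verify that $R=k[x_1,\dots,x_d]$ with the $x_i$ algebraically independent. Since $x_1,\dots,x_d$ is a $k$-basis of $V$, each $y_j$ is a $k$-linear combination of the $x_i$, so $R=k[y_1,\dots,y_d]\subseteq k[x_1,\dots,x_d]\subseteq R$, giving equality. Algebraic independence is automatic because the change-of-basis matrix between $(y_i)$ and $(x_i)$ is invertible over $k$, and a $k$-linear automorphism of $V$ extends to a $k$-algebra automorphism of the polynomial ring, carrying the algebraically independent set $(y_i)$ to the algebraically independent set $(x_i)$.

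There is no substantive obstacle here: the only point requiring any care is the separability argument, which is exactly where the hypothesis $\Char k\nmid |G|$ (Maschke-type condition) enters, and the observation that any $k$-basis of the space of linear forms is a polynomial generating set for $R$.
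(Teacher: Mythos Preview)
Your proof is correct and follows essentially the same route as the paper: restrict $g$ to the degree-one piece, use that $g^{|G|}=\id$ together with $\Char k\nmid |G|$ to diagonalize (the paper phrases this via Jordan canonical form, you via separability of $X^{|G|}-1$), and then observe that an eigenbasis of the linear forms is a polynomial generating set (the paper invokes the symmetric algebra, you a linear change of variables). The differences are purely cosmetic.
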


\begin{proof}

By putting $g$ in the Jordan canonical form, it is an easy exercise to see that $g$ is diagonalizable since $|G|$ and $\Char k$ are coprime.   Thus, we may choose an eigenbasis, $x_1,\dots,x_d$, of $R_1$.  So, $gx_i=\xi_ix_i$ with $\xi_i\in k$.  Since $g^{|G|}$ should act as the identity, each $\xi_i$ must be a $|G|$th root of unity, and so we may write $\xi=\zeta^{\eta_i}$ where $\zeta$ is some fixed primitive $|G|$th root of unity.  Also, $R$ is isomorphic to the symmetric algebra of $R_1$ which is a polynomial ring in the variables $x_1,\dots,x_d$.  Hence,  $x_1,\dots,x_d$ are algebraically independent.  
 
\end{proof}

To apply the results of Section 3, we will observe that in this case
$$T=\bigoplus_{\chi\in L} R_{\chi^{-1}}\sbe R$$  
where $L$ is the class group of $R^G$.  The desired grading $\Gamma$ of $T$ will be the monomial grading with respect to the variables $x_1,\dots,x_d$ defined in the previous lemma.  Before we proceed however, we need to show that this grading is a refinement of $L$, to which end, the following lemma suffices.

\begin{lemma}

If $G$ consists of diagonal matrices, then for any character $\chi:G\to k^\times$, the set of all monomials in $R_\chi$ is a $k$-basis.  

\end{lemma}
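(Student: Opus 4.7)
The plan is to exploit the fact that $R$ is spanned by monomials and, because $G$ acts diagonally, each individual monomial is already a relative invariant. The main step will be showing that any relative invariant for $\chi$ is a $k$-linear combination of \emph{only those} monomials whose character is $\chi$. This is essentially the separation of a representation into its isotypic components, and the crux is that monomials are $k$-linearly independent.

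First I would observe that for $g\in G$ and any monomial $x^{\al}=x_1^{\al_1}\cdots x_d^{\al_d}$, diagonality gives $gx^{\al}=\prod_{i}(gx_i/x_i)^{\al_i}\cdot x^{\al}$, so the scalar $\psi_{\al}(g):=\prod_{i}(gx_i/x_i)^{\al_i}$ defines a character of $G$ and $x^{\al}\in R_{\psi_{\al}}$. Grouping monomials by their character, I would set $V_{\chi}:=\spn_k\{x^{\al}:\psi_{\al}=\chi\}$; then since distinct monomials are $k$-linearly independent, $R=\bigoplus_{\chi}V_{\chi}$ as $k$-vector spaces, and clearly $V_{\chi}\sbe R_{\chi}$.

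Next I would show the reverse containment $R_{\chi}\sbe V_{\chi}$. Given $f\in R_{\chi}$, decompose it uniquely as $f=\sum_{\psi}v_{\psi}$ with $v_{\psi}\in V_{\psi}$. Applying $g$, linearity and the definition of the $V_{\psi}$ yield
$$gf=\sum_{\psi}\psi(g)\,v_{\psi},$$
while $gf=\chi(g)f=\sum_{\psi}\chi(g)\,v_{\psi}$. By uniqueness of the direct-sum decomposition, $\psi(g)v_{\psi}=\chi(g)v_{\psi}$ for every $\psi$ and every $g\in G$. If $v_{\psi}\ne 0$, then $\psi(g)=\chi(g)$ for all $g$, hence $\psi=\chi$. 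Thus $f=v_{\chi}\in V_{\chi}$, proving $R_{\chi}=V_{\chi}$, and in particular the monomials lying in $R_{\chi}$ form a $k$-basis.

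No step here is really an obstacle: the diagonal assumption on $G$ does all the work by turning each monomial into an eigenvector, and the only subtlety is invoking the direct-sum decomposition $R=\bigoplus V_{\psi}$ to conclude uniqueness of the components. I would keep the write-up short, emphasizing the monomial $k$-linear independence as the single non-formal ingredient.
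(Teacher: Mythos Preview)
Your proposal is correct and follows essentially the same approach as the paper's proof: both observe that diagonal action makes each monomial an eigenvector, then write an arbitrary $f\in R_\chi$ in terms of monomials, compute $gf$ two ways, and use $k$-linear independence of monomials to force each contributing monomial to have character $\chi$. Your version packages this as the isotypic decomposition $R=\bigoplus_\psi V_\psi$ and compares components, while the paper compares monomial coefficients directly, but the content is identical.
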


\begin{proof}

Let $X$ be the set of all monomials of $R_\chi$.  Since any distinct monomials are linearly independent, $X$ is linearly independent.  Take any $g\in G$.  
Then for each $i$, $gx_i=\ld_ix_i$ with $\ld_i\in k$.  So for any $\underline{x}^{\underline{\al}}=x_1^{a_1}\cdots x_d^{a_d}$ in $R$, we have 
$$g\underline{x}^{\underline{\al}}=gx_1^{a_1}\cdots x_n^{a_d}=(\ld_1x_1)^{a_1}\cdots (\ld_dx_d)^{a_d}=\underline{\ld} x_1^{a_1}\cdots x_d^{a_d}=\underline{\ld}\underline{x}^{\underline{\al}}$$ 
with $\underline{\ld}=\ld_1^{a_1}\cdots \ld_d^{a_d}$.  Take any $f\in R_\chi$.  We may write $f=\kappa_1\underline{x}^{\underline{\al}_1}+\cdots+\kappa_m\underline{x}^{\underline{\al}_m}$.   On the one hand, we know that 
$$gf=g(\kappa_1\underline{x}^{\underline{\al}_1}+\cdots+\kappa_m\underline{x}^{\underline{\al}_m})
=g\kappa_1\underline{x}^{\underline{\al}_1}+\cdots+g\kappa_m\underline{x}^{\underline{\al}_m}
=\kappa_1\underline{\ld}_1\underline{x}^{\underline{\al}_1}+\cdots+\kappa_m\underline{\ld}_m\underline{x}^{\underline{\al}_m}$$
with $\underline{\ld}_i=\ld_1^{{a_1}_i}\cdots \ld_d^{{a_d}_i}$.  By virtue of $f$ being in $R_\chi$, we also know that 
$$gf=\chi(g)f=\chi(g)\kappa_1\underline{x}^{\underline{\al}_1}+\cdots+\chi(g)\kappa_m\underline{x}^{\underline{\al}_m}$$
However, since monomials are linearly independent, this means that for each $i$, $\kappa_i\underline{\ld}_i=\chi(g)\kappa_i$, and so $\underline{\ld}_i=\chi(g)$.  Therefore, for each $i$, $\underline{x}^{\underline{\al_i}}$ is in $R_\chi$ and thus also in $X$.  Hence, $X$ spans $R_\chi$ and is a basis.  

\end{proof}

\begin{prop}\label{prop1}

Suppose $S$ is a power series ring over a field $k$ in $d$ variables and $G$ is a cyclic group of order $n$ acting on $R$ with $\Char k$ not dividing $n$.  If $g$ generates $G$ and has a primitive $n$th root of unity as an eigenvalue, then $S^G$ has only trivial semidualizing modules.

\end{prop}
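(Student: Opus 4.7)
The plan is to verify the hypotheses of Theorem \ref{theorem1} for $R^G$, where $R=k[x_1,\dots,x_d]$ carries the induced linear action, apply Corollary \ref{main} to conclude $R^G$ has only trivial semidualizing modules, and then transfer to $S^G$ using Lemma \ref{completion} for the faithfully flat completion $R^G_\mm\to S^G$.

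First, diagonalize via Lemma \ref{lemma2} to get $gx_i=\zeta^{\eta_i}x_i$.  The hypothesis that $g$ has a primitive $n$th root of unity as eigenvalue says some $\eta_i$ is coprime to $n$; after relabeling variables and replacing $\zeta$ by an appropriate primitive $n$th root, assume $\eta_1=1$.  Let $P=\langle g^{n/h}\rangle\sbe G$ be the cyclic subgroup generated by the pseudoreflections, so $|P|=h$ for some $h\mid n$, and set $m=n/h$.  Write $\chi_j$ for the character $g\mapsto\zeta^j$.  The Benson lemma (the unlabeled lemma at the end of Section 2) identifies the class group $L$ of $R^G$ with the subgroup of $\hm(G,k^\x)\cong\ZZ/n$ consisting of characters trivial on $P$, namely $L=\{\chi_j:h\mid j\}\cong\ZZ/m$.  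Let $T=\bigoplus_{\chi\in L}R_{\chi^{-1}}\sbe R$.  By the lemma immediately preceding the proposition, each $R_{\chi^{-1}}$ has a $k$-basis of monomials, so the monomial $\ZZ^d$-grading $\Gamma$ on $R$ restricts to $T$, refines the $L$-grading, and has one-dimensional homogeneous components.

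The crux is the selection of $\Lambda=\chi_{-h}$ as generator of $L$ and $x=x_1^h$ as the $\Gamma$-homogeneous representative of $\Lambda\sbe T$.  Since $gx=\zeta^h x$ we have $x\in R_{\chi_h}=R_{\Lambda^{-1}}$, and for each $k<m$, $x^k=x_1^{kh}\in R_{\chi_{kh}}=\Lambda^k$.  To verify indivisibility of $x^k$ in the $R^G$-module $R_{\chi_{kh}}$: any factorization $x_1^{kh}=a\nu$ with $a\in R^G$ forces $a$ to be a unit multiple of $x_1^s$ for some $0\le s\le kh$, since $R$ is a UFD; $G$-invariance of $a$ gives $\zeta^s=1$, i.e., $n\mid s$; and $s\le kh<mh=n$ forces $s=0$, so $a$ is a unit.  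Thus Theorem \ref{theorem1} applies, Corollary \ref{main} yields only trivial semidualizing modules over $R^G$, and Lemma \ref{completion} transfers the conclusion to $S^G$.

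The main obstacle is precisely this selection of $\Lambda$.  Taking the other generator $\Lambda=\chi_h$ instead would place the smallest power of $x_1$ representing $\Lambda^{-1}$ at $x_1^{n-h}$, and then $(x_1^{n-h})^2$ is already divisible by the invariant $x_1^n$ as soon as $h\le n/2$, so indivisibility fails well before $k$ reaches $m$.  The eigenvalue hypothesis is what makes the good choice available: it furnishes a coordinate $x_1$ so that the pure power $x_1^h$, of total degree only $h$, represents $\Lambda^{-1}$, and then the strict inequality $kh<n$ for $k<m$ gives indivisibility of every $x^k$ in one stroke.
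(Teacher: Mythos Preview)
Your argument is essentially the paper's: your pseudoreflection index $h$ is exactly the paper's minimal $\lambda$ with $\chi^\lambda$ generating $L$, and the choice $x=x_1^h$ together with the indivisibility check $kh<n$ is identical to what the paper does. The one omission is the reduction to algebraically closed $k$: Lemma \ref{lemma2} and the Benson class-group lemma are both stated under the standing hypothesis that $k$ is algebraically closed, which the proposition does not assume, so the paper first applies Lemma \ref{completion} to the faithfully flat extension $S^G\hookrightarrow\bar{k}\otimes_k S^G$ before diagonalizing. After inserting that step at the start, your proof matches the paper's.
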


\begin{proof}

By Lemma \ref{completion}, since $\bar{k}\tns S^G$ is a faithfully flat extension of $S^G$, $C$ is a semidualizing $S^G$-module if and only if $\bar{k}\tns_{S^G} C$ is a semidualizing $\bar{k}\tns S^G$-module.  Thus, if there are no nontrivial semidualizing modules for $\bar{k}\tns S^G$, then there are none for $S^G$.  So, we may assume that $k$ is algebraically closed. 

Since $G$ is cyclic and is generated by $g$, a character in $\hm(G,k^\x)$ is completely determined by the image of $g$.  However, $g$ can only be sent to an $n$th root of unity.  Since $k$ is algebraically closed, and since $\Char k$ does not divide $n$, there are $n$ distinct $n$th roots of unity, which form a cyclic group.  Therefore, $G$ is isomorphic to $\hm(G,k^\x)$.  Since class group of $R^G$ is a subgroup of $\hm(G,k^\x)$, this means the class group must be cyclic.    

By the previous lemma, we may write $R=k[x_1,\dots,x_d]$ where $gx_i=\zeta^{\eta_i}x_i$ with $\zeta\in k$ a primitive $|G|$th root on unity.  The assumption tells us that we may assume that $\eta_1=0$.  Define $\chi: G\to k^\x$ by $g\mapsto \ze^{-1}$.  Since $\ze^{-1}$ is a primitive $|G|$th root of unity, $\chi$ generates $\hm(G,k^\x)$.  So, for some $\ld\in\NN$, $\chi^\ld$ generates the class group $L$.  Assume that $\ld$ is as small as possible.  Note that $gx^\ld_1=(\ze x_1)^\ld=\ze^\ld x^\ld_1$, and so $x^\ld_1\in R_{\chi^{-\ld}}$, the reflexive module corresponding to $\chi^{\ld}$.  Since we have chosen $\ld$ to be as small as possible, $|\chi^\ld|=n/\ld$.  Thus, for each $1\le\nu<|\chi^{-\ld}|=n/\ld$, $\ld\nu$ is strictly less than $n$.  Since the smallest power of $x_1$ that is invariant is $n$, this means that $(x_1^\ld)^\nu$ is indivisible.  Therefore, using the monomial grading, the conditions of Corollary \ref{main} and Theorem \ref{theorem1} are satisfied, and thus $R^G$ has no nontrivial semidualizing modules.  Since $S^G$ is the completion of $R^G$, and completion is faithfully flat, we are done by Lemma \ref{completion}.

\end{proof}

We can recover the non modular case of \cite[Corollary 3.21]{Dao11}.

\begin{corollary}

The there exists no semidualizing modules over nonmodular Veronese subrings.  

\end{corollary}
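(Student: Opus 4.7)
The plan is to realize the nonmodular $m$-th Veronese subring as a ring of invariants under a diagonal cyclic action and then apply Proposition \ref{prop1} directly. Writing $R = k[x_1,\dots,x_d]$ (or its completion, depending on which Veronese is meant), I want to exhibit a cyclic group $G$ of order $m$ acting linearly on $R$ so that $R^G$ equals the $m$-th Veronese subring $R^{(m)}$, and so that the generator of $G$ has a primitive $m$-th root of unity as an eigenvalue.

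First I would reduce to the case that $k$ is algebraically closed. The extension $k \to \bar{k}$ is faithfully flat, so Lemma \ref{completion} lets me replace $k$ by $\bar{k}$; thus I may assume $k$ contains a primitive $m$-th root of unity $\zeta$, using the nonmodular hypothesis $\Char k \nmid m$. Next, let $G = \langle g \rangle$ be cyclic of order $m$ acting by the scalar $g \cdot x_i = \zeta x_i$ for every $i$. A monomial $x_1^{a_1} \cdots x_d^{a_d}$ is fixed by $g$ precisely when $\zeta^{a_1 + \cdots + a_d} = 1$, i.e.\ when its total degree is divisible by $m$; hence $R^G = R^{(m)}$.

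With this set up, all hypotheses of Proposition \ref{prop1} are in place: $G$ is cyclic of order $m$, $\Char k \nmid m$, and the generator $g$ acts diagonally with every eigenvalue equal to $\zeta$, a primitive $|G|$-th root of unity. The proposition therefore delivers the conclusion that $R^{(m)}$ admits only trivial semidualizing modules. There is really no obstacle here; the corollary is a direct specialization of Proposition \ref{prop1} to the most uniform class of cyclic invariant rings, and the only mildly delicate point, namely arranging for the required root of unity to lie in the base field, is handled at once by the faithful flatness reduction in the first step.
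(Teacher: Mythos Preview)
Your argument is correct and follows essentially the same route as the paper: realize the $m$-th Veronese subring as $k[[x_1,\dots,x_d]]^G$ for the cyclic group generated by the scalar matrix $\zeta I$, observe that this generator has the primitive $m$-th root of unity $\zeta$ as an eigenvalue, and invoke Proposition~\ref{prop1}. Your preliminary reduction to an algebraically closed base field is harmless but in fact unnecessary, since that reduction is already carried out in the first paragraph of the proof of Proposition~\ref{prop1}.
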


\begin{proof}

 Let $g$ be an $d\x d$ diagonal matrix whose entries are all $\zeta_n$, a primitive $n$th root of unity.  Then the $n$-Veronese subring in $d$ variables is  $R=k[[x_1,\dots,x_d]]^G$ where $G$ is the group generated by $g$.  Since the order of $G$ is $n$, the result follows from the previous proposition.  

\end{proof}

We now come to our main theorem.

\begin{theorem}\label{thm}

If $S$ is a power series  ring over a field $k$ in finitely many variables and $G$ is a cyclic group of order $p^l$ acting on $S$ with $\Char k\ne p$, then $S^G$ has only trivial semidualizing modules.

\end{theorem}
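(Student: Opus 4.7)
The strategy is to reduce the theorem directly to Proposition~\ref{prop1}; the only real work is to produce, from the hypothesis $|G|=p^l$, a generator whose eigenspectrum contains a primitive $p^l$th root of unity. First, by Lemma~\ref{completion}, the semidualizing property is preserved under faithfully flat base change, so I may extend scalars to $\bar k$ and assume that $k$ is algebraically closed. Next, I may replace $G$ by its image in $\mathrm{Aut}(S)$: the kernel $N$ of the action acts trivially, so $S^G=S^{G/N}$, and $G/N$ is still a cyclic group of order $p^{l'}$ with $l'\le l$ (quotients of cyclic $p$-groups are cyclic $p$-groups). If $l'=0$, the action is trivial and $S^G=S$ is regular, which obviously has only trivial semidualizing modules; so assume $l'\ge 1$ and that the action is faithful.

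Working in the polynomial ring model $R=k[x_1,\dots,x_d]$ and passing to $S^G$ via completion as in the proof of Proposition~\ref{prop1}, I apply Lemma~\ref{lemma2} to diagonalize: there exist variables $x_1,\dots,x_d$ such that $gx_i=\zeta^{\eta_i}x_i$ for a generator $g$ of $G$, where $\zeta\in k$ is a fixed primitive $p^{l'}$th root of unity. Every eigenvalue $\zeta^{\eta_i}$ has multiplicative order a power of $p$, and the order of $g$ as an automorphism of $R$ is the $\lcm$ of these orders. Faithfulness forces this $\lcm$ to equal exactly $p^{l'}$. Here the crucial arithmetic observation enters: an $\lcm$ of $p$-powers coincides with the maximum, so at least one of the orders must already equal $p^{l'}$. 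That is, some $\zeta^{\eta_i}$ is itself a primitive $p^{l'}$th root of unity.

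With this eigenvalue identified, the hypotheses of Proposition~\ref{prop1} are satisfied verbatim for the faithful action of $G/N$ on $R$ (and hence on $S$), and that proposition delivers the conclusion that $S^G$ admits only the trivial semidualizing modules. The one place where the argument uses $|G|=p^l$ in an essential way, rather than an arbitrary $n$ coprime to $\Char k$, is exactly the step $\lcm=\max$ for $p$-powers; for a cyclic group of composite order, one could have a faithful action whose eigenvalue orders are proper divisors of $n$ whose $\lcm$ is $n$ but none of which is $n$. Thus the hypothesis on $|G|$ being a prime power is what makes the reduction to Proposition~\ref{prop1} automatic, and this observation is the only conceptual step beyond the work already done in Section~4.
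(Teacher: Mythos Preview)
Your proposal is correct and follows essentially the same route as the paper: both arguments diagonalize the generator via Lemma~\ref{lemma2}, observe that the eigenvalue orders are $p$-powers whose $\lcm$ (equal to their maximum) must be $|G|$, deduce that some eigenvalue is a primitive $|G|$th root of unity, and then invoke Proposition~\ref{prop1}. Your version is a bit more explicit about first passing to the algebraic closure and to a faithful quotient $G/N$, but the core idea is identical.
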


\begin{proof}

By Lemma \ref{lemma}, we may write $R=k[x_1,\dots,x_d]$ where $gx_i=\zeta^{\eta_i}x_i$ with $\zeta\in k$ a primitive $|G|$th root on unity.  We may assume that $\zeta^{\eta_1}$ has the greatest order of all the $\zeta^{\eta_i}$ and set $z=|\ze^{\eta_1}|$.  Since $|\zeta^{\eta_i}|$ is a power of $p$ less than $z$,  we have $|\zeta^{\eta_i}|$ divides $z$ for each $i$, and so $(\ze^{\eta_i})^z=1$.  Thus, viewing $g$ as a diagonal matrix with entries $\ze^{\eta_i}$, $g^z$ is  the identity, and so $n\le z$.  But, $z$ has to be less than $n$, giving us equality.  Hence, $\zeta^{\eta_1}$ is a primitive $n$th root of unity.  However, since our choice of $\zeta$ is arbitrary, we may assume that $\eta_1=1$.  In short, we have $gx_1=\zeta x_1$.  The result follows from the previous proposition.

\end{proof}

The proofs of Theorem \ref{thm} and Proposition \ref{prop1} show that Theorem \ref{theorem1} applies to the class of rings under consideration.  Thus we actually have the following result, which resolves in the affirmative a special case of Conjecture 1.3 in \cite{Gotoetal2013}.

\begin{corollary}

Assume the set up of the previous theorem, and let $D$ be a dualizing module for $S$. If $M$ is a reflexive module of rank 1 and  $M\tns_S \hm_S(M,D)$ is torsionfree, then $M$ is isomorphic to either $S$ or $D$.  

\end{corollary}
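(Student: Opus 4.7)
My plan is to apply Theorem~\ref{theorem1} directly to the class group of the ring of invariants, rather than route through the semidualizing machinery of Corollary~\ref{main}: the present hypothesis supplies torsionfreeness of $M\tns_{S^G}\hm_{S^G}(M,D)$ but no $\ext$-vanishing, and Theorem~\ref{theorem1} is precisely the tool equipped to exploit this weaker input. The arguments of Proposition~\ref{prop1} and Theorem~\ref{thm} have already verified Theorem~\ref{theorem1}'s hypotheses for $S^G$: after reducing to $k$ algebraically closed and passing between $S^G$ and the polynomial analogue $R^G$ via Lemma~\ref{completion}, one obtains a cyclic class group $L$, a monomial refinement $\Gamma$ of the $L$-grading on $T$ whose homogeneous components are one-dimensional, and an element $x_1^\ld\in\Lambda\sbe T$ whose first $|\Lambda|-1$ powers are indivisible. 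Consequently, for any $A,B\in L$ with neither isomorphic to $S^G$, the tensor product $A\tns_{S^G}B$ contains a nonzero torsion element.

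Now, since $M$ is reflexive of rank one over the normal domain $S^G$, it represents an element of $L$; because $S^G$ is Cohen-Macaulay (by Hochster--Eagon in the nonmodular case), the dualizing module $D$ is likewise rank-one reflexive and represents an element of $L$, with $\hm_{S^G}(M,D)$ representing the class $[M]^{-1}\circ[D]$. If neither $M$ nor $\hm_{S^G}(M,D)$ were isomorphic to $S^G$, the preceding paragraph would furnish a nonzero torsion element in $M\tns_{S^G}\hm_{S^G}(M,D)$, contradicting the torsionfreeness hypothesis. Hence at least one of the two must be isomorphic to $S^G$.

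If $M\cong S^G$ we are done. Otherwise $\hm_{S^G}(M,D)\cong S^G$, which forces $[M]=[D]$ in $L$, so $M\cong D$ since rank-one reflexives over a normal domain are determined up to isomorphism by their class. The only real obstacle is the verification that Theorem~\ref{theorem1} applies in this setting, which was already accomplished in Section~4; everything after that reduces to formal class-group bookkeeping plus the identification of $\hm_{S^G}(M,D)$ with the class $[M]^{-1}\circ[D]$, which in turn comes from the standard formula $\hm(I,J)\cong(I^{-1}J)^{**}$ for divisorial fractional ideals.
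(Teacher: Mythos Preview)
Your proposal is correct and follows essentially the same route as the paper's own proof: both invoke the fact, established in the proofs of Proposition~\ref{prop1} and Theorem~\ref{thm}, that Theorem~\ref{theorem1} applies to the invariant ring, conclude that one of $M$ or $\hm(M,D)$ must be free, and then read off $M\cong D$ in the second case. You supply more of the class-group bookkeeping (why $D$ lies in $L$, the identification $\hm(M,D)\cong ([M]^{-1}\circ[D])$) than the paper does, but the argument is the same.
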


\begin{proof}

Since $M$ and $\hm_S(M,D)$ are both elements of the class group, and since Theorem \ref{theorem1} applies, either $M$ or $\hm(M,D)$ is isomorphic to $S$.  In the latter case implies that $M\cong D$.  

\end{proof}

\section*{Acknowledgements}

The author thanks Sean Sather-Wagstaff and Hailong Dao for their useful insights.  The author would also like to thank the referee for their very helpful comments.

\bibliographystyle{amsplain}
\bibliography{SemidualizingPap}

\end{document}